\newcommand{\lvt}{\left|\kern-1.35pt\left|\kern-1.3pt\left|}
\newcommand{\rvt}{\right|\kern-1.3pt\right|\kern-1.35pt\right|}
\renewcommand*\env@matrix[1][*\c@MaxMatrixCols c]{%
 \hskip -\arraycolsep
 \let\@ifnextchar\new@ifnextchar
 \array{#1}}
\newtheorem{teo}{Theorem}
\newtheorem{lemma}{Lemma}
\newtheorem{rem}{Remark}
\renewcommand{\d}{\operatorname{d}}
\renewcommand{\Re}{\operatorname{Re}}
\newcommand{\C}{\mathbb{C}}
\newcommand{\N}{\mathbb{N}}
\DeclareRobustCommand{\gaussk}{\DOTSB\gaussk@\slimits@}
\newcommand{\gaussk@}{\mathop{\vphantom{\sum}\mathpalette\bigcal@{K}}}
\newcommand{\bigcal@}[2]{%
	\vcenter{\m@th
		\sbox\z@{$#1\sum$}%
		\dimen@=\dimexpr\ht\z@+\dp\z@
		\hbox{\resizebox{!}{0.8\dimen@}{$\mathcal{K}$}}%
	}%
}
\newcommand{\cfracplus}{\mathbin{\cfracplus@}}
\newcommand{\cfracplus@}{%
	\sbox\z@{$\dfrac{1}{1}$}%
	\sbox\tw@{$+$}%
	\raisebox{\dimexpr\dp\tw@-\dp\z@\relax}{$+$}%
}
\newcommand{\cfracdots}{\mathord{\cfracdots@}}
\newcommand{\cfracdots@}{%
	\sbox\z@{$\dfrac{1}{1}$}%
	\sbox\tw@{$+$}%
	\raisebox{\dimexpr\dp\tw@-\dp\z@\relax}{$\cdots$}%
}
\newcommand*{\relrelbarsep}{.386ex}
\newcommand*{\relrelbar}{%
	\mathrel{%
		\mathpalette\@relrelbar\relrelbarsep
	}%
}
\newcommand*{\@relrelbar}[2]{%
	\raise#2\hbox to 0pt{$\m@th#1\relbar$\hss}%
	\lower#2\hbox{$\m@th#1\relbar$}%
}
\providecommand*{\rightrightarrowsfill@}{%
	\arrowfill@\relrelbar\relrelbar\rightrightarrows
}
\providecommand*{\leftleftarrowsfill@}{%
	\arrowfill@\leftleftarrows\relrelbar\relrelbar
}
\providecommand*{\xrightrightarrows}[2][]{%
	\ext@arrow 0359\rightrightarrowsfill@{#1}{#2}%
}
\providecommand*{\xleftleftarrows}[2][]{%
	\ext@arrow 3095\leftleftarrowsfill@{#1}{#2}%
}
\newcommand*\pFqskip{8mu}
\newcommand*\pFq{\begingroup
 \catcode`\,\active
 \def ,{\mskip\pFqskip\relax}%
 \dopFq
}
\def\dopFq#1#2#3#4#5{%
 {}_{#1}F_{#2}\biggl[\genfrac..{0pt}{}{#3}{#4};#5\biggr]%
 \endgroup
}
\tikzstyle{block} = [draw, rectangle, 
\title[Hypergeometric Expressions for Type I Jacobi--Piñeiro Polynomials]
{Hypergeometric Expressions for
	Type I Jacobi--Piñeiro Orthogonal Polynomials with
	Arbitrary  Number of Weights}
\subjclass[2020]{33C45, 33C47, 42C05, 47A56.}
\keywords{Multiple orthogonal polynomials, hypergeometric series, Kampé de Fériet series, Jacobi--Piñeiro, Laguerre, AT systems}
\author[Branquinho]{Amílcar Branquinho$^1$}
\address{$^1$CMUC, Departamento de Matem\'atica,
Universidade de Coimbra, Largo D. Dinis, 3000-143 Coimbra, Portugal}
\email{$^1$ajplb@mat.uc.pt}
\author[Díaz]{Juan E. F. Díaz$^{2}$}
\address{$^2$CIDMA, Departamento de Matemática, Universidade de Aveiro, 3810-193 Aveiro, Portugal}
\email{$^2$juan.enri@ua.pt}
\author[Foulqué]{Ana Foulqui\'e-Moreno$^3$}
\address{$^3$CIDMA, Departamento de Matemática, Universidade de Aveiro, 3810-193 Aveiro, Portugal}
\email{$^3$foulquie@ua.pt}
\author[Mañas]{Manuel Mañas$^4$}
\address{$^4$Departamento de Física Teórica, Universidad Complutense de Madrid, 28040-Madrid, Spain}
\email{$^4$manuel.manas@ucm.es}
\thanks{$^1$Acknowledges Centro de Matemática da Universidade de Coimbra (CMUC) -- UIDB/00324/2020, funded by the Portuguese Government through FCT/MEC and co-funded by the European Regional Development Fund through the Partnership Agreement PT2020.}
\thanks{$^2$ and $^3$Acknowledges CIDMA Center for Research and Development in Mathematics and Applications (University of Aveiro) and the Portuguese Foundation for Science and Technology (FCT) within project
UIDB/04106/2020 and UIDP/04106/2020.}
\thanks{The authors acknowledges ``Agencia Estatal de Investigación'' research project [PID2021- 122154NB-I00], \emph{Ortogonalidad y aproximación con aplicaciones en machine learning y teoría de la probabilidad}.}
\begin{document}

\maketitle



\begin{abstract}
For a general number $p\geq 2$ of measures, we provide explicit expressions for the Jacobi--Piñeiro and Laguerre of the first kind multiple orthogonal polynomials of type I, presented in terms of multiple hypergeometric functions.
\end{abstract}

%

\section{Introduction}

	
Multiple orthogonal polynomials are a specialized family of polynomials that arise in various branches of mathematics and engineering. Unlike classical orthogonal polynomials, which are associated with a single weight function, multiple orthogonal polynomials are linked to multiple weight functions and measures simultaneously. These polynomials play a crucial role in diverse fields, including numerical analysis, approximation theory, and mathematical physics, offering powerful tools for solving complex problems involving simultaneous orthogonalities.
In the book \cite{Ismail} there is a comprehensive introduction to the subject and \cite{afm} for the relation with integrable systems.

Recently, the importance of multiple orthogonal polynomials in the Favard spectral description of banded bounded semi-infinite matrices has come to light. This connection has been explored in various sources such as \cite{aim, phys-scrip, Contemporary}, see also \cite{laa}. Additionally, their significance in the context of Markov chains and random walks beyond birth and death has been established, as seen in \cite{JP, hypergeometric, finite, CRM}. Notably, in both of these scenarios, type I polynomials play a pivotal role. Unfortunately, explicit expressions for type I multiple orthogonal polynomials are not widely available. In contrast, for type II polynomials, the Rodrigues' formula leads to explicit hypergeometric expressions that can accommodate an arbitrary number of `classical' weights, as detailed in \cite{AskeyII} and \cite{ContinuosII,Arvesu,Clasicos}. 

In our work presented in \cite{HahnI}, we offer explicit hypergeometric expressions for all type I polynomials within the realm of multiple Hahn orthogonal polynomials under the Askey scheme  \cite{AskeyII,askeyescalar}, a context encompassing only two weights.  Through these efforts, we have successfully constructed bidiagonal factorizations and characterized positive ones \cite{factorization}, as elaborated in \cite{aim, phys-scrip}.

In this paper, we provide explicit hypergeometric expressions for the type I  Jacobi--Piñeiro and type I Laguerre multiple orthogonal polynomials of the first kind, for the first time. To achieve this, we relied on tools acquired from our previous work in \cite{HahnI} and offered a fresh interpretation of results presented by Karp and Prilepkina in \cite{KP}.

For the upcoming discussion, it is essential to establish some fundamental concepts about multiple orthogonal polynomials \cite{nikishin_sorokin}. We begin by examining a system characterized by $p\geq 2$ weight functions, denoted as $w_1, \ldots, w_p$, which map from $\Delta\subseteq\mathbb R$ to $\mathbb R$, in conjunction with a measure $\mu:\Delta\subseteq\mathbb R\rightarrow\mathbb R^+$.

In certain cases, a sequence of type II polynomials $B_{\vec{n}}$ may exist, which are monic with $\deg{B}\leq|\vec{n}|$, satisfying the orthogonality relations 
\begin{align}
 \label{ortogonalidadTipoIIContinua}
 	\int_{\Delta}^{}x^j B_{\vec{n}}(x)w_i(x)\d\mu(x) =0,  && j \in\{0,\dots,n_i-1\}, && i \in \{1,\dots,p\} .
\end{align}
Additionally, there are $p$ sequences of type I polynomials, denoted as $A^{(1)}_{\vec{n}}, \ldots, A^{(p)}_{\vec{n}}$, with $\deg A^{(i)}_{\vec{n}}\leq n_i-1$, meeting the orthogonality conditions 
\begin{align}
\label{ortogonalidadTipoIContinua}
 \sum_{i=1}^p\int_{\Delta}^{}x^j A^{(i)}_{\vec{n}}(x)w_i(x)\d\mu(x)=
 \begin{cases}
 0,\;\text{if}\; j\in\{0,\ldots,|\vec{n}|-2\},\\
 1,\;\text{if}\; j=|\vec{n}|-1.
 \end{cases}
\end{align}
In this context, $\vec{n}=(n_1,\ldots,n_p)\in\mathbb N^p_0$ and $|\vec{n}|
\coloneqq
n_1+\cdots+n_p$. In this paper we will use the notation $\N\coloneq \{1,2,3,\dots\}$ and $\N_0\coloneq \{0\}\cup \N$.

Before delving into the main results, it's important to recall that, in the context of standard orthogonality and for certain families when $p=2$, the polynomials can be represented using generalized hypergeometric series, as discussed in \cite{andrews}. These series, denoted as
\begin{align}
\label{GeneralizedHypergeometricFuntions}
 \pFq{p}{q}{a_1,\ldots,a_p}{\alpha_1,\ldots,\alpha_q}{x}
  \coloneqq
 \sum_{l=0}^{\infty}\dfrac{(a_1)_l\cdots(a_p)_l}{(\alpha_1)_l\cdots(\alpha_q)_l}\dfrac{x^l}{l!}.
\end{align}
Here, $(x)_n$, $x\in\C$ and $n\in\N_0$, known as the Pochhammer symbol, is defined as:
\begin{align*}
 (x)_n
 \coloneqq
 \dfrac{\Gamma(x+n)}{\Gamma(x)}=\begin{cases}
 x(x+1)\cdots(x+n-1) , \;\text{if}\;n\in\N,\\
 1 , \;\text{if}\;n=0.
 \end{cases}
\end{align*}

To establish the results related to the type I polynomials, we will rely on the theorem by Karp and Prilepkina, as presented in \cite[Theorem~2.2]{KP}.
\begin{teo}
 Let be $a\in\mathbb C,\,\vec{f}=(f_1,\ldots,f_r)\in\mathbb C^r,\,\vec{b}=(b_1,\ldots,b_l)\in\mathbb C^l$,
 $\vec{m} =(m_1,\ldots, $ $m_r)\in\mathbb N^r$ and $\vec{k}=(k_1,\ldots,k_l)\in\mathbb N^l$ such that all the elements of the vector
 \begin{multline*}
 (\beta_1,\beta_2,\ldots,\beta_{k_1},\ldots,\beta_{k_1+\cdots+k_{l-1}+1},\beta_{k_1+\cdots+k_{l-1}+2},\ldots,\beta_{k_1+\cdots+k_l}) \\
 \coloneqq
(b_1,b_1+1,\ldots,b_1+k_1-1,\ldots,b_l,b_l+1,\ldots,b_l+k_l-1)
 \end{multline*}
 are distinct. If $\Re(k_1+\cdots+k_l-a-m_1-\cdots-m_r)>0$, then
 \begin{multline}
 \label{KPTheorem}
 \pFq{r+l+1}{r+l}{a,f_1+m_1,\ldots,f_r+m_r,b_1,\ldots,b_l}{f_1,\ldots,f_r,b_1+k_1,\ldots,b_l+k_l}{1}
 \\ =
\Gamma(1-a)\dfrac{(b_1)_{k_1}\cdots(b_l)_{k_l}}{(f_1)_{m_1}\cdots(f_r)_{m_r}} 
 \sum_{i=1}^{k_1+\cdots+k_l}\dfrac{\Gamma(\beta_i)}{\Gamma(\beta_i-a+1)}\dfrac{
 	\prod_{j=1}^{r} 	(f_j-\beta_i)_{m_j}
 }{	\tensor*{\prod\nolimits}{^{k_1+\cdots+k_l}_{j=1}^{,(i)}}^{}(\beta_j-\beta_i)}.
 \end{multline}
\end{teo}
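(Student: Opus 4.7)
My plan combines three ingredients: an algebraic reduction of the series so it carries a single rational factor in the summation index, a partial-fraction decomposition enabled precisely by the distinctness hypothesis on the $\beta_i$'s, and the classical Gauss ${}_2F_1$ summation at unit argument. First, using $(f_j+m_j)_n/(f_j)_n=(f_j+n)_{m_j}/(f_j)_{m_j}$ and $(b_j)_n/(b_j+k_j)_n=(b_j)_{k_j}/(b_j+n)_{k_j}$, together with $(b_j+n)_{k_j}=\prod_{s=0}^{k_j-1}(n+b_j+s)$ which splits each denominator Pochhammer into its $k_j$ consecutive $\beta$'s, I would rewrite the left-hand side of~\eqref{KPTheorem} as
\[
\frac{\prod_{j=1}^{l}(b_j)_{k_j}}{\prod_{j=1}^{r}(f_j)_{m_j}}\sum_{n=0}^{\infty}\frac{(a)_n}{n!}\cdot\frac{P(n)}{Q(n)},\quad P(n)\coloneqq\prod_{j=1}^{r}(f_j+n)_{m_j},\ Q(n)\coloneqq\prod_{i=1}^{N}(n+\beta_i),\ N\coloneqq k_1+\cdots+k_l,
\]
thereby already exhibiting the prefactor that appears outside the sum on the right-hand side of~\eqref{KPTheorem}.

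Since the $\beta_i$'s are pairwise distinct, ordinary partial fractions yields
\[
\frac{P(n)}{Q(n)}=\pi(n)+\sum_{i=1}^{N}\frac{\prod_{j=1}^{r}(f_j-\beta_i)_{m_j}}{(n+\beta_i)\,\prod_{j\neq i}(\beta_j-\beta_i)},
\]
where $\pi$ has degree $\max(0,M-N)$ with $M\coloneqq m_1+\cdots+m_r$, and the simple-pole coefficient at $n=-\beta_i$ is read off as $P(-\beta_i)/\prod_{j\neq i}(\beta_j-\beta_i)$. For each simple-pole summand I would invoke Gauss's theorem (or equivalently insert the Beta-integral representation $1/(n+\beta_i)=\int_0^{1}x^{n+\beta_i-1}\,\d x$ and use the binomial generating function) to obtain
\[
\sum_{n=0}^{\infty}\frac{(a)_n}{n!(n+\beta_i)}=\frac{1}{\beta_i}\,\pFq{2}{1}{a,\beta_i}{\beta_i+1}{1}=\frac{\Gamma(\beta_i)\,\Gamma(1-a)}{\Gamma(\beta_i+1-a)},
\]
which is exactly the shape of each summand on the right-hand side of~\eqref{KPTheorem}.

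The delicate point, and in my view the main obstacle, is the polynomial part $\pi(n)$ when $M\ge N$: the pieces of the partial-fraction expansion are not always individually summable throughout the stated region of convergence, so the splitting cannot be applied term-by-term in full generality. To circumvent this I would first restrict to the sub-domain $\Re(a)<\min(0,N-M)$, where every sub-series is absolutely convergent, and expand $\pi(n)$ in the basis of falling factorials. Each such contribution satisfies
\[
\sum_{n\ge 0}\frac{(a)_n\,n(n-1)\cdots(n-k+1)}{n!}=(a)_k\sum_{m\ge 0}\frac{(a+k)_m}{m!}=(a)_k\lim_{z\to 1^{-}}(1-z)^{-a-k}=0,\quad 0\le k\le M-N,
\]
since $\Re(a+k)<0$ is ensured by the hypothesis $\Re(N-M-a)>0$. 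The polynomial contribution therefore vanishes on this sub-domain and the remaining pieces assemble the right-hand side of~\eqref{KPTheorem}. Both sides of the identity are meromorphic in $a$ with the same poles, so analytic continuation in $a$ extends the equality to the full region $\Re(k_1+\cdots+k_l-a-m_1-\cdots-m_r)>0$ stated in the theorem.
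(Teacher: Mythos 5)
Your proof is correct, but there is nothing in the paper to compare it against: the authors do not prove this statement at all — they import it verbatim from Karp and Prilepkina \cite{KP} and only supply a proof for the reformulated version (Lemma \ref{KP}), which takes \eqref{KPTheorem} as given, regroups the sum over $i$ into blocks indexed by $q$, and applies a reversal formula. Your argument — rewriting the series as $\sum_n \frac{(a)_n}{n!}\,P(n)/Q(n)$, partial-fractioning $P/Q$ over the simple roots $-\beta_i$ (precisely where the distinctness hypothesis enters), evaluating each pole term by Gauss's summation to produce $\Gamma(1-a)\Gamma(\beta_i)/\Gamma(\beta_i-a+1)$, and annihilating the polynomial part via $\sum_{m\ge 0}(a+k)_m/m!=0$ for $\Re(a+k)<0$ — is the standard route to Karlsson--Minton-type summations and is essentially the derivation in the cited source. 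You correctly identify the only delicate point: the term-by-term splitting is not valid on the whole region $\Re(k_1+\cdots+k_l-a-m_1-\cdots-m_r)>0$, since the individual ${}_2F_1$ pieces require $\Re(a)<1$ and the falling-factorial sums require $\Re(a+k)<0$; your fix of first working on the subdomain $\Re(a)<\min(0,N-M)$, where every piece converges absolutely, and then continuing analytically in $a$ (the left-hand side is analytic on the full region, so the apparent poles of $\Gamma(1-a)$ on the right are removable) closes that gap. The only thing worth making explicit is the tacit genericity assumption that no $\beta_i$ is a non-positive integer, which is already implicit in the statement through the factors $\Gamma(\beta_i)$.
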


\begin{rem}
Notice that here we have introduced the following notation
\begin{align*}
    {\prod}^{p,(i)}_{j=1}a_j, && i\in\{1,\ldots,p\} ,
\end{align*}
to indicate that the $i$-th factor is removed from the product.
In the forthcoming discussion, we will make extensive use of this as well as the following notation:
\begin{align*}
	&\vec{e}_p
\coloneqq
(1,\ldots,1)\in\mathbb R^p .
\end{align*}
Given a vector $\vec{a}\in\mathbb R^p$, the notation $\vec{a}^{(i)}\in\mathbb R^{p-1}$ indicates that the $i$-th component of $\vec{a}$ has been removed.
\end{rem}

However, in this context, we will rephrase this theorem into an equivalent, more practical form
\begin{lemma}
\label{KP}
 Let be $a\in\mathbb C,\,\vec{f}=(f_1,\ldots,f_r)\in\mathbb C^r,\,\vec{b}=(b_1,\ldots,b_l)\in\mathbb C^l$ with all its components distinct, $\vec{m}=(m_1,\ldots,m_r)\in\mathbb N^r$ and $\vec{k}=(k_1,\ldots,k_l)\in\mathbb N^l$. If $\Re(k_1+\cdots+k_l-a-m_1-\cdots-m_r)>0$, then
 \begin{multline}
\label{KPTheoremReformulated}
 \pFq{r+l+1}{r+l}{a,f_1+m_1,\ldots,f_r+m_r,b_1,\ldots,b_l}{f_1,\ldots,f_r,b_1+k_1,\ldots,b_l+k_l}{1}\\=\Gamma(1-a)\dfrac{(b_1)_{k_1}\cdots(b_l)_{k_l}}{(f_1)_{m_1}\cdots(f_r)_{m_r}} \sum_{q=1}^{l}\dfrac{(-1)^{k_q-1}
 	 	\prod_{j=1}^{r} 	(\tilde f_j-m_j)_{m_j}
 \Gamma(b_q+k_q-1)}{(k_q-1)!\prod^{l,(q)}_{j=1}(b_j-b_q-k_q+1)_{k_j}\Gamma(b_q+k_q-a)} \\
 \times\pFq{r+l+1}{r+l}{-k_q+1,-b_q-k_q+1+a, \tilde f_1
 ,\ldots, \tilde f_r
 ,\vec b^{(q)}-(b_q+k_q-1) \vec{e}_{l-1}
 }{-b_q-k_q+2,
 \tilde f_1-m_1
 ,\ldots,
 \tilde f_r-m_r
 ,\,\vec{b}^{(q)}+\vec{k}^{(q)}-(b_q+k_q-1)\vec{e}_{l-1}}{1} ,
\end{multline}
with $\tilde f_j = f_j-b_q+1-k_q+m_j$, $j = 1, \ldots , r$.
\end{lemma}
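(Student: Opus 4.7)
The plan is to derive the reformulated identity directly from the Karp--Prilepkina theorem \eqref{KPTheorem} by grouping the sum on the right-hand side into $l$ blocks and then recognizing each block as a terminating generalized hypergeometric series.

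First I would partition the $k_1 + \cdots + k_l$ summation indices according to the block to which $\beta_i$ belongs, so that the outer sum in \eqref{KPTheorem} becomes a sum over $q \in \{1, \ldots, l\}$ of an inner sum where $\beta$ ranges over $\{b_q, b_q+1, \ldots, b_q + k_q - 1\}$. Within the $q$-th block I would reindex by setting $\beta = b_q + k_q - 1 - t$ with $t \in \{0, 1, \ldots, k_q - 1\}$, so that $t = 0$ corresponds to the largest value $b_q + k_q - 1$, which is the one that should naturally factor out as the prefactor in \eqref{KPTheoremReformulated}.

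Next I would substitute this into each factor of the summand in \eqref{KPTheorem} and use the reflection identity $\Gamma(x - t) = (-1)^t \Gamma(x)/(1 - x)_t$ to rewrite the ratio $\Gamma(\beta)/\Gamma(\beta - a + 1)$ at index $t$, relative to its $t=0$ value, as a ratio of Pochhammer symbols; this produces the hypergeometric parameters $-b_q - k_q + 1 + a$ (upper) and $-b_q - k_q + 2$ (lower). For the numerator $\prod_j (f_j - \beta)_{m_j}$, with the substitution $\tilde f_j = f_j - b_q + 1 - k_q + m_j$ one has $f_j - \beta = \tilde f_j - m_j + t$, so the Gamma-function manipulation $(\tilde f_j - m_j + t)_{m_j}/(\tilde f_j - m_j)_{m_j} = (\tilde f_j)_t/(\tilde f_j - m_j)_t$ yields the parameter pair $\tilde f_j$ (upper) and $\tilde f_j - m_j$ (lower) for each $j$. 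The denominator $\prod_{j \neq i}(\beta_j - \beta_i)$ splits into two contributions: the intra-block part, which evaluates to $(-1)^{k_q - 1 - t}(k_q - 1 - t)!\,t!$ and is converted via $1/(k_q-1-t)! = (-1)^t(-k_q+1)_t/(k_q-1)!$ into the terminating upper parameter $-k_q + 1$ together with the canonical $t!$ in the denominator of the hypergeometric series; and the inter-block part, which factors as $\prod_{q' \neq q}(b_{q'} - b_q - k_q + 1 + t)_{k_{q'}}$, giving rise to the upper parameters $b_{q'} - (b_q + k_q - 1)$ and lower parameters $b_{q'} + k_{q'} - (b_q + k_q - 1)$, encoded in $\vec{b}^{(q)} - (b_q + k_q - 1)\vec{e}_{l-1}$ and $\vec{b}^{(q)} + \vec{k}^{(q)} - (b_q + k_q - 1)\vec{e}_{l-1}$ respectively.

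Finally, after collecting the $t=0$ constants into the stated prefactor and verifying that the signs combine cleanly (the sign $(-1)^{k_q - 1 - t}$ from the intra-block product and the sign $(-1)^t$ coming from converting $1/(k_q - 1 - t)!$ multiply to the overall $(-1)^{k_q - 1}$ sitting outside the inner sum), I would read off the inner sum as the terminating ${}_{r+l+1}F_{r+l}$ at unit argument appearing in \eqref{KPTheoremReformulated}. The main obstacle is the careful bookkeeping of the intra-block denominator together with the tracking of signs when reindexing, since this is where the terminating parameter $-k_q+1$ and the overall $(-1)^{k_q-1}$ prefactor both originate; once this is done, the remaining substitutions are routine manipulations of Gamma ratios.
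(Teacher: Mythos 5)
Your proposal is correct and follows essentially the same route as the paper: both decompose the Karp--Prilepkina sum in \eqref{KPTheorem} into $l$ blocks and identify each block with a terminating ${}_{r+l+1}F_{r+l}$ series at unit argument. The only difference is that you reindex each block from the top ($\beta=b_q+k_q-1-t$), which builds the series reversal directly into the bookkeeping, whereas the paper first sums from the bottom ($\beta=b_q+i-1$) and then applies the reversal formula from Slater to reach the same reversed form; your sign and Pochhammer manipulations check out.
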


\begin{proof}
	For the sake of clarity, we'll temporarily assign a new name to the expression within the sum in Equation \eqref{KPTheorem} as follows:
\begin{align*}
 F(\beta_i)
 \coloneqq
 \dfrac{\Gamma(\beta_i)}{\Gamma(\beta_i-a+1)}\dfrac{ 	\prod_{j=1}^{r} 	(f_j-\beta_i)_{m_j}}{\prod_{j=1}^{k_1+\cdots+k_l,(i)}(\beta_j-\beta_i)}.
\end{align*}
	Now, we can reorganize this sum as follows:
\begin{align*}
 \sum_{i=1}^{k_1+\cdots+k_l}F(\beta_i)
 & =\sum_{i=1}^{k_1}F(\beta_i)+\sum_{i=k_1+1}^{k_1+k_2}F(\beta_i)+\cdots+\sum_{i=k_1+\cdots+k_{l-1}+1}^{k_1+\cdots+k_l}F(\beta_i)
 \\
  &=\sum_{i=1}^{k_1}F(b_1+i-1)+\sum_{i=1}^{k_2}F(b_2+i-1)+\cdots+\sum_{i=1}^{k_l}F(b_l+i-1) \\
  &=\sum_{q=1}^{l}\sum_{i=1}^{k_q}F(b_q+i-1) .
\end{align*}
	Replacing $F(b_q+i-1)$ and simplifying, we can rewrite each $i$-labeled sum as a $_{r+l+1}F_{r+l}$ function, resulting in the following expression:
\begin{multline*}
 \pFq{r+l+1}{r+l}{a,f_1+m_1,\ldots,f_r+m_r,b_1,\ldots,b_l}{f_1,\ldots,f_r,b_1+k_1,\ldots,b_l+k_l}{1}=\Gamma(1-a)\dfrac{(b_1)_{k_1}\cdots(b_l)_{k_l}}{(f_1)_{m_1}\cdots(f_r)_{m_r}}\\
 \times\sum_{q=1}^{l}\dfrac{\prod_{j=1}^r(f_j-b_q)_{m_j}\Gamma(b_q)}{(k_q-1)!\prod_{j=1}^{l,(q)}(b_j-b_q)_{k_j}\Gamma(b_q+1-a)}\\
 \times\pFq{r+l+1}{r+l}{-k_q+1,b_q,-f_1+b_q+1,\ldots,-f_r+b_q+1,
 \,(b_q+1)\vec{e}_{l-1}-\vec{b}^{(q)}-\vec{k}^{(q)}}{b_q+1-a,-f_1+b_q-m_1+1,\ldots,-f_r+b_q-m_r+1,
 \,(b_q+1)\vec{e}_{l-1}-\vec{b}^{(q)}}{1}.
\end{multline*}
	Finally, applying the following reversal formula, as seen in \cite[Equation 2.2.3.2]{slater}:
\begin{multline*}
 \pFq{p+1}{q}{-n,a_1,\ldots,a_p}{b_1,\ldots,b_q}{1}
 \\=
(-1)^n\dfrac{(a_1)_n\cdots(a_p)_n}{(b_1)_n\cdots(b_q)_{n}}\pFq{q+1}{p}{-n,-b_1-n+1,\ldots,-b_q-n+1}{-a_1-n+1,\ldots,-a_p-n+1}{1}.
\end{multline*}
	We can apply this reversal formula to each of the $_{r+l+1}F_{r+l}$ functions within the $q$-labeled sum and simplify to obtain expression \eqref{KPTheoremReformulated}.
\end{proof}

\section{Hypergeometric expressions for type I Jacobi--Piñeiro multiple orthogonal polynomials}

For this family, the weight functions are defined as follows:
\begin{align}
 \label{WeightsJP}
 	w_{i}(x;\alpha_i) =x^{\alpha_i},  && i \in\{1,\ldots,p\}, && \d\mu(x) =(1-x)^\beta\d x, && \Delta =[0,1] .
\end{align}
Here $\vec{\alpha}=(\alpha_1,\ldots,\alpha_p)$ with $\alpha_1,\ldots,\alpha_p,\beta>-1$, and, to establish an AT system, we require that $\alpha_i-\alpha_j\not\in\mathbb Z$ for $i\neq j$.
It's also important to note that the moments are given by:
\begin{align}
\label{MomentJP}
\int_{0}^{1}x^{\alpha_i+k}(1-x)^\beta\d x
=\dfrac{\Gamma(\beta+1)\Gamma(\alpha_i+k+1)}{\Gamma(\alpha_i+\beta+k+2)},
&& k \in \mathbb N_0 .
\end{align}

In \cite[\S 3]{HahnI} and \cite{JP}, the type I polynomials were derived for the case when $p=2$ and expressed using a $_3F_2$ function as follows:
\begin{multline*}
 P^{(i)}_{(n_1,n_2)}(x;\alpha_1,\alpha_2,\beta)
 =(-1)^{n_1+n_2-1}\dfrac{(\alpha_1+\beta+n_1+n_2)_{n_1}({\alpha}_2+\beta+n_1+n_2)_{{n}_2}}{(n_i-1)!(\hat{\alpha}_i-\alpha_i)_{\hat{n}_i}}\\
\times\dfrac{\Gamma(\alpha_i+\beta+n_1+n_2)}{\Gamma(\beta+n_1+n_2)\Gamma(\alpha_i+1)}\pFq{3}{2}{-n_i+1,\alpha_i+\beta+n_1+n_2,\alpha_i-\hat{\alpha}_i-\hat{n}_i+1}{\alpha_i+1,\alpha_i-\hat{\alpha}_i+1}{x} .
\end{multline*}
Here, $\hat{\alpha}_i
\coloneqq
\delta_{i,2}\alpha_1+\delta_{i,1}\alpha_2$, $\hat{n}_i
\coloneqq
\delta_{i,2}n_1+\delta_{i,1}n_2$ with $i\in\{1,2\}$.

In an attempt to generalize this expression, we arrive at the following theorem:
\begin{teo}
\label{JPTypeITheorem}
The Jacobi--Piñeiro polynomials of type I for $p\geq2$ are given by:
\begin{multline}
\label{JPTypeI}
 P^{(i)}_{\vec{n}} (x;\alpha_1,\ldots,\alpha_p,\beta)
  =(-1)^{|\vec{n}|-1}\dfrac{\prod_{k=1}^{p}(\alpha_k+\beta+|\vec{n}|)_{n_k}}{(n_i-1)!\prod_{k= 1}^{p,(i)}(\alpha_k-\alpha_i)_{n_k}}
  \\
  \times\dfrac{\Gamma(\alpha_i+\beta+|\vec{n}|)}{\Gamma(\beta+|\vec{n}|)\Gamma(\alpha_i+1)}
  \pFq{p+1}{p}{-n_i+1,\alpha_i+\beta+|\vec{n}|, (\alpha_i+1)\vec{e}_{p-1}-\vec{\alpha}^{(i)}-\vec{n}^{(i)}}{\alpha_i+1,(\alpha_i+1)\vec{e}_{p-1}-\vec{\alpha}^{(i)}}{x} .
\end{multline} 
\end{teo}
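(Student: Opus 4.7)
The strategy is to verify that the candidate formula \eqref{JPTypeI} satisfies the defining orthogonality relations \eqref{ortogonalidadTipoIContinua}. The AT system hypothesis $\alpha_i-\alpha_j\notin\Z$ guarantees uniqueness of the type I polynomials, so it suffices to check all $|\vec{n}|$ orthogonality conditions.

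The first step is to expand the truncated ${}_{p+1}F_p$ in \eqref{JPTypeI} as a polynomial in $x$ and, using the moment formula \eqref{MomentJP} termwise, rewrite the integral $\int_{0}^{1}x^{j} P^{(i)}_{\vec{n}}(x)\,x^{\alpha_i}(1-x)^{\beta}\,\d x$ as a hypergeometric ${}_{p+2}F_{p+1}$ evaluated at $1$: an extra top parameter $\alpha_i+j+1$ and extra bottom parameter $\alpha_i+\beta+j+2$ are produced by the integration. The sum over $i$ of these contributions then has the precise shape of the right-hand side of \eqref{KPTheoremReformulated} in Lemma \ref{KP}, provided one uses the dictionary
\begin{equation*}
l=p,\quad r=1,\quad k_q = n_q,\quad b_q = 1-\alpha_q-n_q,\quad a = j+1,\quad f_1 = \beta+j+2,\quad m_1 = |\vec{n}|-j-2.
\end{equation*}
The reversal identity $(1+\alpha_q-\alpha_j-n_j)_{n_j}=(-1)^{n_j}(\alpha_j-\alpha_q)_{n_j}$ produces, after collecting signs, the factor $(-1)^{|\vec{n}|-1}\prod_{k\neq i}(\alpha_k-\alpha_i)_{n_k}$ appearing in \eqref{JPTypeI}; similarly, $(\tilde f_1-m_1)_{m_1}$, the ratio $\Gamma(b_q+k_q-1)/\Gamma(b_q+k_q-a)$, and the common factor $(b_1)_{k_1}\cdots(b_p)_{k_p}/(f_1)_{m_1}$ align with the remaining gamma and Pochhammer factors of the prefactor in \eqref{JPTypeI} and the moments.

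Applying Lemma \ref{KP} in the reverse direction (right-hand side to left-hand side) then collapses the sum over $i$ into a single ${}_{p+2}F_{p+1}$ evaluated at $1$, multiplied by a rational prefactor containing $\Gamma(1-a)=\Gamma(-j)$. For $j\in\{0,1,\ldots,|\vec{n}|-3\}$ this gamma factor is singular while the collapsed ${}_{p+2}F_{p+1}$ is finite (the excess of the sum of bottom parameters over top parameters is $1$, ensuring convergence), so the whole expression vanishes, which is the required orthogonality. The boundary indices $j=|\vec{n}|-2$ (where $m_1=0$) and $j=|\vec{n}|-1$ (where $m_1=-1$) fall outside the strict hypothesis $\vec{m}\in\N^r$ of Lemma \ref{KP}; I would treat them by a continuity argument in $m_1$, or by applying Lemma \ref{KP} directly with $l=p+1$ and $r=0$, absorbing the problematic parameters into a new $b$-block: for $j=|\vec{n}|-2$ the limiting expression again evaluates to zero, and for $j=|\vec{n}|-1$ the collapsed hypergeometric develops a compensating $\Gamma(-j)$ pole that cancels the prefactor pole and leaves the required normalization constant $1$.

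The principal obstacle is the parameter bookkeeping in the application of Lemma \ref{KP}: one must carefully track several reversal-induced signs of the form $(-1)^{n_q}$, $(-1)^j$ and $(-1)^{|\vec{n}|-1}$, and rewrite a handful of gamma quotients as Pochhammer symbols so that everything reduces on the nose to the prefactor of \eqref{JPTypeI}. The secondary obstacle is the handling of the two boundary values of $j$, since the hypothesis $\vec{m}\in\N^r$ is violated there and a separate argument, either a careful limit or a second application of the lemma with an adjusted splitting, is needed in order to pick up the correct normalization.
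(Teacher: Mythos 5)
Your first step coincides with the paper's: expand the $_{p+1}F_p$, integrate termwise with \eqref{MomentJP}, and reduce the orthogonality conditions to the hypergeometric identity \eqref{OrthogonalityJPTypeI}. From there you diverge in an interesting way. The paper applies Lemma \ref{KP} in the forward direction to each individual term, with $a=-n_i+1$ and $r=2$, $(f_1,f_2)=(\alpha_i+1,\alpha_i+\beta+j+2)$, so that the $i$-th summand decomposes into minus the sum of the $q\neq i$ summands and the total telescopes to $0$ (plus, for $j=|\vec n|-1$, one extra surviving term). You instead recognize the whole sum over $i$ as the right-hand side of \eqref{KPTheoremReformulated} with $a=j+1$, $l=p$, $b_q=1-\alpha_q-n_q$, $k_q=n_q$, and collapse it to a single $_{p+2}F_{p+1}$ divided by $\Gamma(1-a)\,C$; the pole of $\Gamma(-j)$ then kills the sum for $j\le|\vec n|-3$. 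I checked your dictionary: the parameters and the $q$-dependent coefficients do match those of \eqref{OrthogonalityJPTypeI} up to a $q$-independent constant (via $(1+\alpha_q-\alpha_j-n_j)_{n_j}=(-1)^{n_j}(\alpha_j-\alpha_q)_{n_j}$ and $\Gamma(-\alpha_q)/\Gamma(-\alpha_q-j)=(-1)^j(\alpha_q+1)_j$), the convergence excess of the collapsed series is $+1$, and the Karlsson--Minton-type hypothesis gives exactly $1>0$. Your route buys a cleaner vanishing argument (no telescoping), at the price of having to interpret the identity at integer $a$ as a limit $a\to j+1$ (both sides are analytic near $a=j+1$ and the left side stays finite, so $C\sum_q\to 0$); you should also note that $C=\prod_q(b_q)_{k_q}/(f_1)_{m_1}$ must be nonzero, which under the AT hypothesis fails only if some $\alpha_q=0$ and is then recovered by continuity in $\vec\alpha$.

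The one genuine flaw is your stated mechanism for $j=|\vec n|-1$. Your proposed fix ($r=0$, $l=p+1$, absorbing the pair $\beta+|\vec n|$, $\beta+|\vec n|+1$ into a new $b$-block with $k_{p+1}=1$) is the right move, but the collapsed $_{p+2}F_{p+1}$ does \emph{not} develop a compensating pole: its parametric excess is still $+1$, so it converges and the factor $1/\Gamma(1-|\vec n|)$ again annihilates the \emph{entire} $(p+1)$-term Karp--Prilepkina sum. The normalization $1$ comes from elsewhere: the decomposition now has $p+1$ terms while the orthogonality sum has only $p$, and the surplus term $q=p+1$ (whose hypergeometric terminates to $1$ because its first top parameter is $-k_{p+1}+1=0$) contributes $(-1)^{|\vec n|}(\beta+1)_{|\vec n|-1}/\prod_{k=1}^p(\alpha_k+\beta+|\vec n|)_{n_k}$. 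Since the full sum vanishes, the orthogonality sum equals minus this surplus term, and multiplying by the prefactor of \eqref{OrthogonalityJPTypeI} gives exactly $1$. This is the same "extra term in the decomposition" that the paper isolates; if you chase the pole you describe instead, the argument stalls. With that correction, and with the routine remark that $j=0$ and $j=|\vec n|-2$ are handled by simply dropping the $f$-parameters whose $m$ vanishes (they cancel between numerator and denominator), your proof closes.
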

\begin{proof}
Initially, we will leverage Equation \eqref{MomentJP} and our understanding of hypergeometric functions to derive a more convenient expression for the following integral:
\begin{multline*}
\int_{0}^{1} x^j P^{(i)}_{\vec{n}}(x;\alpha_1,\ldots,\alpha_p,\beta)x^{\alpha_i}(1-x)^{\beta}\d x\\
\begin{aligned}
	&=\begin{multlined}[t][.8\textwidth]
	(-1)^{|\vec{n}|-1}\dfrac{\prod_{k=1}^{p}(\alpha_k+\beta+|\vec{n}|)_{n_k}}{(n_i-1)!\prod_{k= 1}^{p,(i)}(\alpha_k-\alpha_i)_{n_k}} 
 \\
 \times\dfrac{\Gamma(\alpha_i+\beta+|\vec{n}|)}{\Gamma(\beta+|\vec{n}|)\Gamma(\alpha_i+1)}
\sum_{l=0}^{n_i-1}\dfrac{(-n_i+1)_l}{l!}\dfrac{(\alpha_i+\beta+|\vec{n}|)_l}{(\alpha_i+1)_l}\prod_{k= 1}^{p,(i)} \dfrac{(\alpha_i-\alpha_k-n_k+1)_l}{(\alpha_i-\alpha_k+1)_l} \\
 \times\underbrace{\int_{0}^{1}x^{\alpha_i+j+l}(1-x)^\beta\d x}_{\frac{\Gamma(\beta+1)\Gamma(\alpha_i+j+l+1)}{\Gamma(\alpha_i+\beta+j+l+2)}\,\text{by \eqref{MomentJP}}}
\end{multlined}\\
 &=\begin{multlined}[t][.8\textwidth]
 	(-1)^{|\vec{n}|-1}\dfrac{\prod_{k=1}^{p}(\alpha_k+\beta+|\vec{n}|)_{n_k}}{(n_i-1)!\prod_{k= 1}^{p,(i)}(\alpha_k-\alpha_i)_{n_k}} \dfrac{\Gamma(\alpha_i+\beta+|\vec{n}|)(\alpha_i+1)_j}{\Gamma(\alpha_i+\beta+j+2)(\beta+1)_{|\vec{n}|-1}}\\
 \times\pFq{p+2}{p+1}{-n_i+1,\alpha_i+j+1,\alpha_i+\beta+|\vec{n}|,(\alpha_i+1)\vec{e}_{p-1}-\vec{\alpha}^{(i)}-\vec{n}^{(i)}}{\alpha_i+1,\alpha_i+\beta+j+2,(\alpha_i+1)\vec{e}_{p-1}-\vec{\alpha}^{(i)}}{1}.
 \end{multlined}
\end{aligned}
\end{multline*}
In this scenario, the orthogonality conditions \eqref{ortogonalidadTipoIContinua} that we need to establish are equivalent to the following hypergeometric summation formula.
\begin{multline}
\label{OrthogonalityJPTypeI}
 (-1)^{|\vec{n}|-1} \dfrac{\prod_{k=1}^{p}(\alpha_k+\beta+|\vec{n}|)_{n_k}}{(\beta+1)_{|\vec{n}|-1}}
 \sum_{i=1}^{p}\dfrac{\Gamma(\alpha_i+\beta+|\vec{n}|)(\alpha_i+1)_j}{(n_i-1)!\prod_{k= 1}^{p,(i)}(\alpha_k-\alpha_i)_{n_k}\Gamma(\alpha_i+\beta+j+2)}\\
 \times\pFq{p+2}{p+1}{-n_i+1,\alpha_i+j+1,\alpha_i+\beta+|\vec{n}|,(\alpha_i+1)\vec{e}_{p-1}-\vec{\alpha}^{(i)}-\vec{n}^{(i)}}{\alpha_i+1,\alpha_i+\beta+j+2,(\alpha_i+1)\vec{e}_{p-1}-\vec{\alpha}^{(i)}}{1}
 \\
 =\begin{cases}
 0,\;\text{if}\; j\in\{0,\ldots,|\vec{n}|-2\},\\
 1,\;\text{if}\; j=|\vec{n}|-1.
 \end{cases}
\end{multline}
We will now apply Equation \eqref{KPTheoremReformulated} to the preceding $_{p+2}F_{p+1}$ hypergeometric function.

For $j\in\{0,\ldots,|n|-2\}$, in accordance with the notation introduced in Lemma \ref{KP}, we can identify
\begin{align*}
 &a=-n_i+1; \quad (f_1,f_2)=(\alpha_i+1,\alpha_i+\beta+j+2), \quad (m_1,m_2)=(j,|\vec{n}|-2-j);\\
 &(b_1,\ldots,b_{p-1})=(\alpha_i+1)\vec{e}_{p-1}-\vec{\alpha}^{(i)}-\vec{n}^{(i)},\quad (k_1,\ldots,k_{p-1})=\vec{n}^{(i)} .
\end{align*}
The condition $\Re(k_1+\cdots+k_{p-1}-a-m_1-m_2)=\Re(|\vec{n}|-n_i+n_i-1-j-|\vec{n}|+2+j)=1>0$ is met, allowing us to apply Equation \eqref{KPTheoremReformulated}. After some calculations, we can express the $_{p+2}F_{p+1}$ hypergeometric series in \eqref{OrthogonalityJPTypeI} as follows:
\begin{multline*}
 \dfrac{\Gamma(\alpha_i+\beta+|\vec{n}|)(\alpha_i+1)_j}{(n_i-1)!\prod_{k= 1}^{p,(i)}(\alpha_k-\alpha_i)_{n_k}\Gamma(\alpha_i+\beta+j+2)}
 \\
\times
\pFq{p+2}{p+1}{-n_i+1,\alpha_i+j+1,\alpha_i+\beta+|\vec{n}|,(\alpha_i+1)\vec{e}_{p-1}-\vec{\alpha}^{(i)}-\vec{n}^{(i)}}{\alpha_i+1,\alpha_i+\beta+j+2,(\alpha_i+1)\vec{e}_{p-1}-\vec{\alpha}^{(i)}}{1}
 \\=
 -\sum_{ q=1,q\neq i}^{p}\dfrac{\Gamma(\alpha_q+\beta+|\vec{n}|)(\alpha_q+1)_j}{(n_q-1)!\prod_{k= 1}^{p,(q)}(\alpha_k-\alpha_q)_{n_k}\Gamma(\alpha_q+\beta+j+2)}
  \\
 \times
 \pFq{p+2}{p+1}{-n_q+1,\alpha_q+j+1,\alpha_q+\beta+|\vec{n}|,(\alpha_q+1)\vec{e}_{p-1}-\vec{\alpha}^{(q)}-\vec{n}^{(q)}}{\alpha_q+1,\alpha_q+\beta+j+2,(\alpha_q+1)\vec{e}_{p-1}-\vec{\alpha}^{(q)}}{1} .
\end{multline*}
The left-hand side of Equation \eqref{OrthogonalityJPTypeI} equals 0 for $j=0,\ldots,|\vec{n}|-2$.

Now, when $j=|\vec{n}|-1$, referring to the notation introduced in Lemma \ref{KP}, we can identify:
\begin{align*}
 &a=-n_i+1; \quad f=\alpha_i+1, \quad m=|\vec{n}|-1,\\
 &(b_1,\ldots,b_{p})=\left(\alpha_i+\beta+|\vec{n}|,\,(\alpha_i+1)\vec{e}_{p-1}-\vec{\alpha}^{(i)}-\vec{n}^{(i)}\right) ,
  \\
 &(k_1,\ldots,k_{p})=\left(1,\,\vec{n}^{(i)}\right) .
\end{align*}
The condition $\Re(k_1+\cdots+k_{p}-a-m)=\Re(1+|\vec{n}|-n_i+n_i-1-|\vec{n}|+1)=1>0$ is met, allowing us to apply Equation \eqref{KPTheoremReformulated}. This time, we introduce an additional parameter~$b$, which results in an extra term in the decomposition. After some calculations, we can express the $_{p+2}F_{p+1}$ function in \eqref{OrthogonalityJPTypeI} as follows:
\begin{multline*}
 \dfrac{\Gamma(\alpha_i+\beta+|\vec{n}|)(\alpha_i+1)_{|\vec{n}|-1}}{(n_i-1)!\prod_{k= 1}^{p,(i)}(\alpha_k-\alpha_i)_{n_k}\Gamma(\alpha_i+\beta+|\vec{n}|+1)}
 \\
 \times
 \pFq{p+2}{p+1}{-n_i+1,\alpha_i+|\vec{n}|,\alpha_i+\beta+|\vec{n}|,(\alpha_i+1)\vec{e}_{p-1}-\vec{\alpha}^{(i)}-\vec{n}^{(i)}}{\alpha_i+1,\alpha_i+\beta+|\vec{n}|+1,(\alpha_i+1)\vec{e}_{p-1}-\vec{\alpha}^{(i)}}{1}
 \\
=(-1)^{|\vec{n}|-1}\dfrac{(\beta+1)_{|\vec{n}|-1}}{\prod_{k=1}^{p}(\alpha_k+\beta+|\vec{n}|)_{n_k}}
 -\sum_{ q=1,q\neq i}^{p}\dfrac{\Gamma(\alpha_q+\beta+|\vec{n}|)(\alpha_q+1)_{|\vec{n}|-1}}{(n_q-1)!\prod_{k= 1}^{p,(q)}(\alpha_k-\alpha_q)_{n_k}\Gamma(\alpha_q+\beta+|\vec{n}|+1)}\\
 \times
 \pFq{p+2}{p+1}{-n_q+1,\alpha_q+|\vec{n}|,\alpha_q+\beta+|\vec{n}|,(\alpha_q+1)\vec{e}_{p-1}-\vec{\alpha}^{(q)}-\vec{n}^{(q)}}{\alpha_q+1,\alpha_q+\beta+|\vec{n}|+1,(\alpha_q+1)\vec{e}_{p-1}-\vec{\alpha}^{(q)}}{1}.
\end{multline*}
For $j=|\vec{n}|-1$, the left-hand side of Equation \eqref{OrthogonalityJPTypeI} equals $1$.
\end{proof}

\section{Hypergeometric expressions for type I Laguerre of first kind multiple orthogonal polynomials}
In this case, we define the weight functions as:
\begin{align}
 \label{WeightsLaguerreI}
 w_{i}(x;\alpha_i)=e^{-x}x^{\alpha_i}; && i=1,\ldots,p; && \d\mu(x)=\d x; && \Delta=[0,\infty) .
\end{align}
Here $\vec{\alpha}=(\alpha_1,\ldots,\alpha_p)$ with $\alpha_1,\ldots,\alpha_p>-1$, and, to establish an AT system, we require that $\alpha_i-\alpha_j\not\in\mathbb Z$ for $i\neq j$.

In \cite[\S 7]{HahnI}, it was demonstrated that the type I polynomials for $p=2$ could be derived from the Jacobi--Piñeiro polynomials through the following limit:
\begin{multline*}
 L^{(i)}_{(n_1,n_2)}(x;\alpha_1,\alpha_2)\\=\lim_{\beta\rightarrow\infty}
 \dfrac{\Gamma(\beta+n_1+n_2)}{(\alpha_1+\beta+n_1+n_2)_{n_1}(\alpha_2+\beta+n_1+n_2)_{n_2}\Gamma(\alpha_i+\beta+n_1+n_2)}
 P_{(n_1,n_2)}^{(i)}\left(\dfrac{x}{\beta};\alpha_1,\alpha_2,\beta\right).
\end{multline*}
This yields the expression:
\begin{multline*}
 L_{(n_1,n_2)}^{(i)}(x;\alpha_1,\alpha_2)
 \\ 
 =(-1)^{n_1+n_2-1}\dfrac{1}{(n_i-1)!\Gamma(\alpha_i+1)(\hat{\alpha}_i-\alpha_i)_{\hat{n}_i}}
 \pFq{2}{2}{-n_i+1,\alpha_i-\hat{\alpha}_i-\hat{n}_i+1}{\alpha_i+1,\alpha_i-\hat{\alpha}_i+1}{x} .
\end{multline*}
Here, $\hat{\alpha}_i=\delta_{i,2}\alpha_1+\delta_{i,1}\alpha_2$, and $\hat{n}_i=\delta_{i,2}n_1+\delta_{i,1}n_2$ with  $i\in\{1,2\}$.

Once again, in an attempt to generalize this expression, we arrive at:
\begin{teo}
\label{LaguerreITypeITheorem}
The Laguerre of the first kind polynomials of type I for $p\geq2$ are expressed as:
\begin{multline}
\label{LaguerreITypeI}
 L^{(i)}_{\vec{n}}(x;\alpha_1,\ldots,\alpha_p)
 =(-1)^{|\vec{n}|-1}\dfrac{1}{(n_i-1)!\prod_{i\neq k=1}^{p}(\alpha_k-\alpha_i)_{n_k}\Gamma(\alpha_i+1)}
  \\
 \times \pFq{p}{p}{-n_i+1, (\alpha_i+1)\vec{e}_{p-1}-\vec{\alpha}^{(i)}-\vec{n}^{(i)}}{\alpha_i+1,(\alpha_i+1)\vec{e}_{p-1}-\vec{\alpha}^{(i)}}{x} .
\end{multline}
These polynomials are derived by applying the limit to the Jacobi--Piñeiro type I polynomials \eqref{JPTypeI}, as described in:
\begin{multline}
\label{LaguerreITypeIasLimitJPTypeI}
 L^{(i)}_{\vec{n}}(x;\alpha_1,\ldots,\alpha_p)
 \\=
\lim_{\beta\rightarrow\infty}\dfrac{\Gamma(\beta+|\vec{n}|)}{\prod_{k=1}^{p}(\alpha_k+\beta+|\vec{n}|)_{n_k}\Gamma(\alpha_i+\beta+|\vec{n}|)}P_{\vec{n}}^{(i)}\left(\dfrac{x}{\beta};\alpha_1,\ldots,\alpha_p,\beta\right) .
\end{multline}
\end{teo}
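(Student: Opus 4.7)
My plan is twofold: derive \eqref{LaguerreITypeI} as the limit \eqref{LaguerreITypeIasLimitJPTypeI} of the JP formula \eqref{JPTypeI}, and then verify that the resulting polynomials satisfy the Laguerre orthogonality. For the first part I would substitute $x\mapsto x/\beta$ in \eqref{JPTypeI} and multiply by the normalization factor from \eqref{LaguerreITypeIasLimitJPTypeI}; that normalization is designed precisely to cancel the $\beta$-dependent prefactor $\prod_{k=1}^{p}(\alpha_k+\beta+|\vec{n}|)_{n_k}\Gamma(\alpha_i+\beta+|\vec{n}|)/\Gamma(\beta+|\vec{n}|)$, leaving exactly the $\beta$-independent constant in \eqref{LaguerreITypeI}. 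Inside the $_{p+1}F_p$ the only $\beta$-dependent Pochhammer symbol is $(\alpha_i+\beta+|\vec{n}|)_\ell$, whose ratio with $\beta^\ell$ converges to $1$, so $(\alpha_i+\beta+|\vec{n}|)_\ell (x/\beta)^\ell \to x^\ell$ termwise; since the series terminates at $\ell=n_i-1$, the termwise limit is immediate and produces precisely the $_pF_p$ of \eqref{LaguerreITypeI}.

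For the orthogonality I would mirror the proof of Theorem \ref{JPTypeITheorem}. Plugging \eqref{LaguerreITypeI} into $\int_0^\infty x^j L^{(i)}_{\vec{n}}(x)e^{-x}x^{\alpha_i}\,\d x$, expanding the $_pF_p$ as a finite sum and using the Laguerre moments $\int_0^\infty x^{\alpha_i+\ell+j}e^{-x}\,\d x=\Gamma(\alpha_i+\ell+j+1)$, reduces the orthogonality conditions \eqref{ortogonalidadTipoIContinua} with weights \eqref{WeightsLaguerreI} to a hypergeometric identity of the form ``sum over $i$ of $_{p+1}F_p$'s at unit argument equals $0$ or $1$''. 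Lemma \ref{KP} applies with parameters analogous to those selected in the JP proof, but with the $\beta$-parameter absent; the same algebraic telescoping that yielded $0$ or $1$ for JP now yields $0$ or $1$ for Laguerre.

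The main obstacle is the hypergeometric bookkeeping: checking that the Karp--Prilepkina decomposition telescopes correctly in the $\beta$-free setting, and in particular that the ``extra term'' distinguishing the $j=|\vec{n}|-1$ case (which in the JP proof came from the special parameter $\alpha_i+\beta+|\vec{n}|$) now arises from a different $b_q$. A fully independent alternative is to pass to the limit $\beta\to\infty$ directly in the JP orthogonality via the change of variables $x=y/\beta$, using $(1-y/\beta)^\beta\to e^{-y}$ together with dominated convergence; this avoids re-running Lemma \ref{KP} but forces one to track carefully the $\beta$-powers coming from the rescaling of the moments and of the polynomial.
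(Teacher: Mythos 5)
Your first step---obtaining \eqref{LaguerreITypeI} from \eqref{JPTypeI} by the termwise limit in the terminating series---is fine, and is also how the paper reads the limit \eqref{LaguerreITypeIasLimitJPTypeI}. The gap is in your primary route for the orthogonality. Inserting the moments $\Gamma(\alpha_i+j+l+1)=\Gamma(\alpha_i+j+1)(\alpha_i+j+1)_l$ turns $\int_0^\infty x^jL^{(i)}_{\vec n}(x)\,e^{-x}x^{\alpha_i}\,\d x$ into a terminating ${}_{p+1}F_{p}$ at unit argument whose only admissible Karlsson--Minton identification (forced, since $\alpha_i-\alpha_k\notin\Z$) is $r=1$ with $f_1=\alpha_i+1$, $m_1=j$, and $b$-pairs $b_k=\alpha_i+1-\alpha_k-n_k$, $k_k=n_k$. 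The hypothesis of Lemma \ref{KP} then reads $\Re(\sum_q k_q-a-m_1)=|\vec n|-1-j>0$, which holds for $j\le|\vec n|-2$ but equals $0$ at the normalization case $j=|\vec n|-1$. In the Jacobi--Piñeiro proof that case was rescued by reassigning $(\alpha_i+\beta+|\vec n|,\,\alpha_i+\beta+|\vec n|+1)$ as an extra $(b,b+k)$-pair with $k=1$, which simultaneously restores the hypothesis and produces the extra constant term giving the value $1$. In the Laguerre setting no such pair exists: the candidate pair $\alpha_i+|\vec n|$ (numerator) versus $\alpha_i+1$ (denominator) has denominator minus numerator equal to $1-|\vec n|<0$, the wrong sign for a $(b,b+k)$-pair with $k\in\N$. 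So your assertion that ``the same algebraic telescoping now yields $0$ or $1$'' does not go through at $j=|\vec n|-1$; you would need a boundary-case extension of the Karlsson--Minton theorem that the paper does not provide. You correctly flagged this as the main obstacle, but you did not resolve it.

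The ``fully independent alternative'' in your last sentence is in fact the paper's actual proof, and it is the one you should complete: substitute $x\mapsto x/\beta$ in the Jacobi--Piñeiro orthogonality already established in Theorem \ref{JPTypeITheorem}, and pass to the limit $\beta\to\infty$ under the integral. The $\beta$-powers you worry about are exactly absorbed by the prefactor, whose limit is $1$; the domination needed for Lebesgue's theorem is supplied by $\left(1-\frac{x}{\beta}\right)^{\beta}I_{[0,\beta)}\le e^{-x}$ together with the observation that the coefficients of the rescaled, renormalized Jacobi--Piñeiro polynomial are bounded uniformly in $\beta>0$ by (essentially) those of $L^{(i)}_{\vec n}$. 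Carried out this way, no further application of Lemma \ref{KP} is required and both cases $j\le|\vec n|-2$ and $j=|\vec n|-1$ follow at once.
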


\begin{proof}
In this context, we will leverage the fact that the polynomials defined in \eqref{LaguerreITypeI} can be obtained through Jacobi--Piñeiro as shown in \eqref{JPTypeI} via the limit \eqref{LaguerreITypeIasLimitJPTypeI}. In Theorem \ref{JPTypeITheorem}, we have established the following orthogonality conditions for the Jacobi--Piñeiro polynomials $P_{\vec{n}}^{(i)}$:
\begin{align*}
 \sum_{i=1}^p\int_{0}^{1}x^{j}P^{(i)}_{\vec{n}}(x;\alpha_1,\ldots,\alpha_p,\beta)x^{\alpha_i}(1-x)^\beta\d x=
 \begin{cases}
 0,\;\text{if}\;j\in \{0,\ldots,|\vec{n}|-2\},\\
 1,\;\text{if}\;j=|\vec{n}|-1.
 \end{cases}
\end{align*}
This expression can be reformulated by substituting the variable $x$ with $\frac{x}{\beta}$ and introducing certain constants as follows:
\begin{multline*}
 \sum_{i=1}^p \dfrac{1}{\beta^{|\vec{n}|-1-j}}\dfrac{\prod_{k=1}^{p}(\alpha_k+\beta+|\vec{n}|)_{n_k}\Gamma(\alpha_i+\beta+|\vec{n}|)}{\Gamma(\beta+|\vec{n}|)\beta^{\alpha_i+j+1}}\\
 \times\int_{0}^{\beta}x^{j}\dfrac{\Gamma(\beta+|\vec{n}|)}{\prod_{k=1}^{p}(\alpha_k+\beta+|\vec{n}|)_{n_k}\Gamma(\alpha_i+\beta+|\vec{n}|)}P^{(i)}_{\vec{n}}\left(\dfrac{x}{\beta};\alpha_1,\ldots,\alpha_p,\beta\right)x^{\alpha_i}\left(1-\dfrac{x}{\beta}\right)^\beta\d x
  \\
 =
 \begin{cases}
 {0},\;\text{if}\;j\in\{0,\ldots,|\vec{n}|-2\},\\
 {1},\;\text{if}\;j=|\vec{n}|-1.\\
 \end{cases}
\end{multline*}
Now, our objective is to apply the limit as $\beta\rightarrow\infty$ to both sides of the preceding equation. To ensure that the limit can be interchanged with the integral, we need to establish that the modulus of the expression within the integral is bounded for all $\beta>0$.

On one hand, we can establish that:
\begin{align*}
	 \left(1-\dfrac{x}{\beta}\right)^\beta I_{[0,\beta)} \leq \mathrm e^{-x}, &&\beta >0, && x \in\mathbb R^+.
\end{align*}
On the other hand, based on its definition, the coefficients of the polynomial:
\begin{align*}
 \dfrac{\Gamma(\beta+|\vec{n}|)}{\prod_{k=1}^{p}(\alpha_k+\beta+|\vec{n}|)_{n_k}\Gamma(\alpha_i+\beta+|\vec{n}|)}P^{(i)}_{\vec{n}}\left(\dfrac{x}{\beta};\alpha_1,\ldots,\alpha_p,\beta\right)
\end{align*}
are bounded by the coefficients of the respective polynomial $L_{\vec{n}}^{(i)}(x;\alpha_1,\ldots,\alpha_p)$, cf. \eqref{LaguerreITypeI}, for all $\beta>0$ and $x\in\mathbb R^+$. This implies that the modulus of the aforementioned polynomial can be bounded for all $\beta>0$ and $x\in\mathbb R^+$ by the sum of the moduli of the coefficients of $L_{\vec{n}}^{(i)}(x;\alpha_1,\ldots,\alpha_p)$ plus an additive constant. This entire sum can then be multiplied by $x^{n_i-1}$ if $x>1$ or by $x^0$ if $x\leq1$.

So, we have shown that the expression within the integral is bounded for all $\beta>0$ and all $x\in\mathbb R^+$. By applying Lebesgue's dominated convergence theorem, we can interchange the limit as $\beta\rightarrow\infty$
\begin{multline*}
 \sum_{i=1}^p
 \overbrace{\lim_{\beta\rightarrow\infty}\dfrac{1}{\beta^{|\vec{n}|-1-j}}\dfrac{\prod_{k=1}^{p}(\alpha_k+\beta+|\vec{n}|)_{n_k}\Gamma(\alpha_i+\beta+|\vec{n}|)}{\Gamma(\beta+|\vec{n}|)\beta^{\alpha_i+j+1}}}^{1}\\
 \times\int_{0}^{\infty}x^{j}
 \underbrace{\lim_{\beta\rightarrow\infty}\dfrac{\Gamma(\beta+|\vec{n}|)}{\prod_{k=1}^{p}(\alpha_k+\beta+|\vec{n}|)_{n_k}\Gamma(\alpha_i+\beta+|\vec{n}|)}P^{(i)}_{\vec{n}}\left(\dfrac{x}{\beta};\alpha_1,\ldots,\alpha_p,\beta\right)}_{L^{(i)}_{\vec{n}}(x;\alpha_1,\ldots,\alpha_p)\,\text{by \eqref{LaguerreITypeIasLimitJPTypeI}}}
 x^{\alpha_i} \\
 \times\underbrace{\lim_{\beta\rightarrow\infty}\left(1-\dfrac{x}{\beta}\right)^\beta}_{\mathrm e^{-x}}\d x
 =\begin{cases}
 0,\;\text{if}\;j\in\{0,\ldots,|\vec{n}|-2\},\\
 1,\;\text{if}\;j=|\vec{n}|-1.
 \end{cases}
\end{multline*}
We have established the orthogonality conditions \eqref{ortogonalidadTipoIContinua} for the polynomials \eqref{LaguerreITypeI} with respect to the weight functions \eqref{WeightsLaguerreI}.
\end{proof}

\section{Conclusions and outlook}

In this paper, we have successfully derived explicit generalized hypergeometric expressions for two families of type I multiple orthogonal polynomials with an arbitrary number of weights. This finding is particularly pertinent in contexts where type I polynomials play a crucial role, such as their application in Markov chains with transition matrices featuring $p$ subdiagonals.

For future research, an essential objective is to identify similar hypergeometric expressions using Kampé de Fériet functions for Hahn multiple orthogonal polynomials with $p$ measures. This exploration within the Askey scheme will enable us to acquire corresponding hypergeometric expressions for the multiple orthogonal  polynomial of type I families under Hahn.

\end{document}